\date{}
\def\tri
\pgfmathsetmacro{\mySqrt}{sqrt(.75)}
\def\({\Big(}
\def\){\Big)}
\newcommand{\R}{{ \mathbb  R  }}
\newcommand{\Z}{  \mathbb Z }
\newcommand{\N}{  \mathbb N }
 \newcommand{\ov}{  \overline}
 \newcommand{\Q}{    T}
\newtheorem{Thm}{Theorem}[section]
\newtheorem{Lemma}[Thm]{Lemma}
\newcommand{\dsize}{\displaystyle}
\renewcommand{\cal}{\mathcal}
\numberwithin{equation}{section}
\title  {{\Large   On  the Sierpiński triangle and its generalizations}}
\author{L. De Carli\thanks{Dept. Mathematics and statistics, Florida International University,   Miami, FL 33199, USA. 
 email:{decarlil@fiu.edu}} 
\and A. Echezabal \thanks{Dept. Mathematics and statistics, Florida International University,   Miami, FL 33199, USA. email:{ ancheza@fiu.edu}
}
\and I. Morell \thanks{Dept. Mathematics and statistics, Florida International University,   Miami, FL 33199, USA.  email:{imore040@fiu.edu  }}}
\begin{document}
	

\maketitle


\begin{abstract}
	By examining arithmetic operations between numbers expressed in base $m\ge 2$, we uncover new families of fractal sets in the plane that include the  classical Sierpiński triangle as a special case.

\end{abstract}

\noindent
Mathematics Subject Classification (2020):    28A80.
 {Keywords:}  Fractals, Sierpiński triangle. 

\section{Introduction}
Fractals are fascinating objects in mathematics.  
They are figures   where each part, when magnified, resembles the whole (a property known as {\it self-similarity})  and  defy  the traditional notion  of dimension. 
Fractals  also appear in  fields as diverse as dynamical systems, number theory, computer graphics, and even biology and finance  \cite{BP, F, mandelbrot, P, W}.

One of the most famous and historically relevant examples of fractals is the  Sierpiński triangle.   It is named after the Polish mathematician Waclaw Sierpiński (1882 -1969)    but it appeared as a decorative pattern many centuries before his work   \cite{Si, C, MW}.
\begin{figure}[h]
	\includegraphics[width=0.4\textwidth]{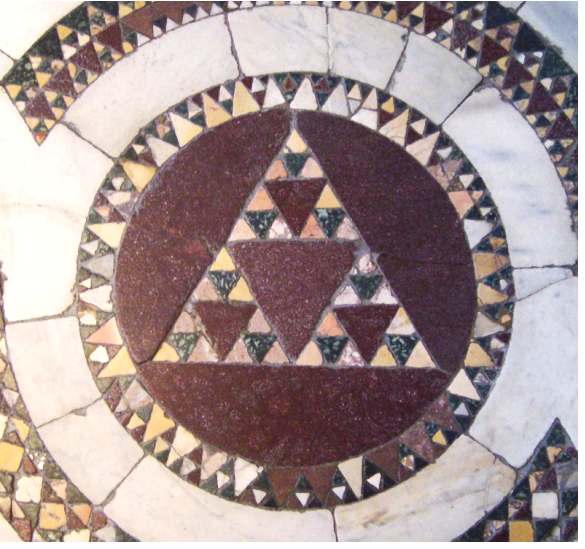}
	\quad \def\topDepth{5}
	\begin{tikzpicture}[scale=2.7*.5^\topDepth,cm={1,0,.5,\mySqrt,(0,0)}]
		\tri
	\end{tikzpicture}
	\caption{\small{ Details of a stone mosaic of the 11th century) in the San Clemente basilica, Rome (Foto: Carlini)   }}
	\label{fig:sierpinski}
\end{figure}

The Sierpiński triangle   can be constructed through a recursive process of removing smaller and smaller   triangles from an initial triangle (see Figure~\ref{fig:sierpinski}). It can also be characterized as the set of points $(x, y)$ in the square $Q=[0,1] \times [0,1]$ such that the base 2 representation  of $x$ and $y$ never have a $1$ in the same digit position; see, for instance, \cite[Exercise~1.2.4]{E}.

\medskip
In this paper we use the construction of the Sierpiński triangle   as a model  for  the construction of a broader class of planar fractals which, for integers $m \geq 2$, can be characterized in terms of the base  $m$ representations of  their points.  The connection  between digit patterns  of numbers in base $m$  and fractal structures is largely   known (see, e.g., \cite[Chapter~10]{F}) but, to the best of our knowledge, the  fractals that we construct in this paper  are not in the literature.
We also produce  fractals   whose points   can be characterized  in terms of   their representation  in a modified (or: \emph{balanced}) versions of the standard base $m$.


\medskip
Our paper is organized as follows. In Section 2 we review some basic properties of   numbers in different  bases; in section 3  and 4 we define  two new classes of fractals  that we term {\it  triangle fractals } and {\it hexagon fractals},   and we evaluate their  Minkowski dimension in section 5.

\section{ Numbers  in different bases}

Our   number system is in base ten: each digit of a number  represents a multiple of a power of $10$, and the value of the number is  the sum of  these weighted terms. 
More generally,   any non-negative integer $x$ can be uniquely expressed as a finite sum
\begin{equation}\label{e-base m}
	x = x_0 + x_1 m + x_2 m^2 + \cdots + x_k m^k,
\end{equation}
where   $m \ge  2$ is a given integer, and  the  coefficients $x_i $ range from $0$ to $m-1$. This representation allows   to interpret   $x$ as a polynomial in $m$ with coefficients in the ring $\mathbb{Z}_m := \mathbb{Z}/m\mathbb{Z}$ of congruence classes modulo $m$.
We represent $\mathbb{Z}_m $  by the set of  integers  $ \{0, 1, \ldots, m - 1\}$.

The  base $m = 2$ is especially prominent in applications and it allows to represent   numbers   using only the digits $0$ and $1$, but  also other bases play important roles in various contexts.

Also non-integer numbers  can be represented as   linear combinations of  powers  of  a  given base $m$.   We   can write 
\begin{equation}\label{e-base m-2}
	x = x_k m^k + \cdots + x_1 m + x_0 + \frac{x_{-1}}{m  } + \frac{x_{-2}}{m^2  }+ \cdots,
\end{equation}
where each  $x_j$ belongs to $\Z_m$. 
We denote with $$
[x]_m := [x_k \cdots x_1 x_0.x_{-1}x_{-2}\cdots]_m,
$$
the  {\it base $m$ representation}  of    $x$ and with  $ [x_i] _m :=x_i$    the $i$-th digit  in this   representation.  When there is no risk of confusion, we will just write $x= [x_k \cdots x_1 x_0.x_{-1}x_{-2}\cdots]_m$ and $x_i=[x_i]_m$.

\medskip
We say that  $x$  has a \emph{finite base $m$ representation},  or:  $x$ is a \emph{finite     decimal in base $m$, } if its base $m$ representation  contains only finitely many nonzero  digits.

It is not difficult to verify that a number cannot have two distinct finite   representations in base $m$; however,  a number can have   a finite and an infinite representation in the same base. For instance, in base ten  we have  $1 = 0.9999\ldots$. In base two, the fraction $\frac{1}{2}$ can be represented either as   $[0.100\ldots]_2$ or as   $[0.01111\ldots]_2$.

\medskip
When multiplying or dividing   decimal numbers by a power of  the base, the position of the decimal point shifts either to the right (when multiplying) or to the left (when dividing). For instance,   if $x=  [0.\,x_1x_2x_3 \cdots ]_m $, then  $m^2 x = [x_1x_2.\,x_3 ...  ]_m$ and $\frac 1{m^2} x = [0.00x_1x_2 x_3  \cdots  ]_m$.

Arithmetic operations such as addition and multiplication between finite     decimals in base $m$  are carried out digit by digit, starting from the rightmost digit. When  the result   exceeds $m - 1$, the excess is carried over to the next digit to the left- a procedure similar to the familiar carrying process in base ten.
For instance, the sum of  $x = [10.2]_3$ and $y = [21.1]_3$  is
$ 
x + y =  [102]_3.
$

\medskip

We have represented  $\mathbb{Z}_m$  by the set of integers  $ \{0, 1, \ldots, m-1\}$; however, since two integers $k$ and $k + \ell m$ (for any $\ell \in \mathbb{Z}$) belong to the same congruence class modulo $m$, the elements of $\mathbb{Z}_m$ can be represented using many different sets of symbols. 

We give the following definition: given  integers   $m>2$   and    $b\in [1, \frac m2]$,  the   \emph{$b$-balanced base $m$ representation} of $\mathbb{Z}_m$  is  the set
$$
\mathbb{Z}_m^{(b)} := \{-b,\, -b+1,\, \ldots,\, m-b-1\}.
$$
This set contains exactly $m$ consecutive integers and, by construction, each element corresponds uniquely to a residue class modulo $m$.  
Different choices of $b$ yield different  representations of $\Z_m$, which became particularly useful    when symmetry around zero is desired.

A remarkable example  of  balanced  base is the $1 $-balanced base three, also  called  {\it balanced ternary  base}.  In this base,   numbers can be written using the symbols  $0$,  $1$, $-1$.  
The balanced ternary representation  offers computational  advantages over the standard  base 3 representation, such as simpler rules for performing  arithmetic operations. 
It is   used in   some analog circuit design   where signals have three states (positive, zero, and negative) and in   modern machine learning algorithms. See e.g. \cite{FT}, \cite{TT2} and the references cited  there.

\medskip
Addition and multiplication in  $\Z_m^{(b)} $ can be   defined in the usual way,  and the   results are taken in $b$-balanced   base  $m$.  
For instance,   $2+1=3$ in $\Z_5 $, but  $2+1=-2$  in  $\Z_5^{(2)}=\{-2,-1, 0, 1, 2\}$. 
With these operations,    $\Z_m^{(b)}$  forms a    commutative rings; when $m$ is prime,  it is also a fields  because every nonzero element has a multiplicative inverse.  

\medskip
As for the standard base $m$,  integers can be identified with polynomials in  $m$ with coefficients in  $\Z_m^{(b)}$ and any   number $x\in\R$ can be written as
in \eqref{e-base m-2}   with  $x_j\in\Z_m^{(b)}.$ 

For a given $x\in\R$, we denote with 
$$
[x]_m^b := [x_k \cdots x_1 x_0.x_{-1}x_{-2}\cdots]_m^b,
$$
the  {\it $b$-balanced base $m$ representation}  of    $x$, and with  $ [x_i] _m^b :=x_i$  the $i$-th  digit in this representation.

As for the  standard base $m$, we  will just write $x=[x_k\cdots  x_{1}  x_0.\, x_{ -1} x_{-2}\cdots ]_m^b$  and $x_i=[x_i]_{m}^b$ when  there is no risk of ambiguity. When $x\in [0,1]$,  we may write $x=[0. x_1x_2\, \cdots ]_m^b$ instead of $ [0. x_{-1}x_{-2}\,\cdots]_m^b$.

\medskip
When $x$ is an integer,    the  $b$-balanced base $m$ representation of $x$ can be obtained as follows: 
let  $ n_0= x $  and   let   $r_0$ be the reminder of the division of $n_0 $ by $m$.  If  $r_0\in [-b, m-b-1] $,  set  $ x _0=r_0$, and if  $r_0 \ge  m-b$ or $r_0<-b$, set  $ x _0=r_0-m$. Then, set  $  n_{1} = (n_0- x_0)/m$,   replace $n_0$ with $n_1$    and     repeat the process  until   $n_k=0$.  We obtain  $x=[x_k \cdots x_1 x_0]_m^b$.

For instance,  the   balanced ternary   representation of $ x=14$  can be produced from the following steps:
\begin{itemize}
	\item 	$  n_0=14 = 3*4+2$, so $r_0=2$, $ x_0= -1,\ n_1= (14-(-1))/3=5$.
	\item $n_1= 5= 3*1+2$, so $r_1=2,\, x _1=-1,\ n_2= (5-(-1))/3=2$.
	\item  $n_2=2=3*0+2$, so $ r_2=2,\, x _2= -1,\ n_3= (2-(-1))/3=1$.
	\item  $n_3=1=3*0+1$, so $ r_3=x _3=  1,\ n_4= 0$.
\end{itemize}
Letting   $-1=\ov 1$,   we  obtain $14=[1\ov 1\, \ov 1 \, \ov 1]_3^1$.   Note  that
$-14=[\ov 1 \,1\,  1 \,  1]_3^1$.




\medskip

Addition and multiplication between  finite   balanced base $m$ decimals are performed digit by digit, starting from the rightmost digit. When the result exceeds $m-b-1$    or it is  less than $-b$,   the excess  (which can be positive or negative) is carried over to the next digit to the left.
For instance,  
given $x = [2.  \ov  1 \, \ov 2 ]_5^2$ and $ y = [ 1  2. 1 \ov 2]_5^2$,    where $\ov 1$ and $\ov 2$ denote  $-1$  and $ -2$, we have
$x+y = [ 2 \ov 1.\ov 1\, 1]_5^2.$

\medskip

It is natural to ask the following question:

\medskip
{\it When can the sum of two real numbers, expressed in base $ m $ (or in the $ b $-balanced base $ m $), be computed digit by digit without   any carries?}

\medskip

In base $ m $, this question can be reformulated as follows: for which $ x,\, y \ge 0$ does the equality
$	[(x + y)_i]_m = [x_i]_m + [y_i]_m  $ 
hold for every $i\in\Z$? 

Clearly, this  "no-carry"  condition    is only possible when 
\begin{equation}\label{e-no-carry-cond2}
	0 \leq [x_i]_m + [y_i]_m \leq m - 1 \quad \text{for all } i \in \mathbb{Z}.
\end{equation}

In  $ b $-balanced base $ m $ representation,  the equality 
$
[(x + y)_i]_m^b = [x_i]_m^b + [y_i]_m^b
$
holds for all $i\in\Z$ if and only if 
\begin{equation}\label{e-no-carry-cond2b}
	-b \leq [x_i]_m^b + [y_i]_m^b \leq m - 1 - b \quad \text{for all } i \in \mathbb{Z}.
\end{equation}
Answering the question above  leads to the construction of  new families of fractals    that  include the   Sierpiński triangle as a special case.

\section{ The triangle fractals}

In this section  we construct the Sierpiński triangle and characterize it in terms of the base-2 representations of the coordinates of its points. We then define a new family of triangular fractals whose points can be described using the base $m$ representations of their coordinates.

\medskip

We  will  use the following notation: 
given a set $D\subset \R^2$,  a scalar $t>0$ and a vector $v\in \R^2$,  we  denote by  $tD =\{tP,\,  P\in D\}$ and by $v+D=\{v+P, \ P\in D\}$ the dilation and the translation of $D$.

The  Sierpiński triangle can be constructed  as follows. Let     $S_0 $ the   closed triangle with vertices $(0,0), \ (0,1)$ and $( 1,0)$;   for every $j\ge 0$ we    define
$$\mbox{$ S_{j+1}= \frac 12S_j \cup(\frac 12S_j +(\frac 12, 0) ) \cup (\frac 12S_j  +(0,\,\frac 12 ))$} 
$$ 
(see Figure 2).  We can also write:
\begin{equation}\label{e-def-Sj} S_{j+1}= \bigcup_{h, k \in\{0,1\} \atop{ k +h\leq 1}}\hskip -.5 cm \mbox{$\big(\frac 12 S_j+ (\frac k2, \, \frac h2)\big)$}. 
\end{equation} 
%

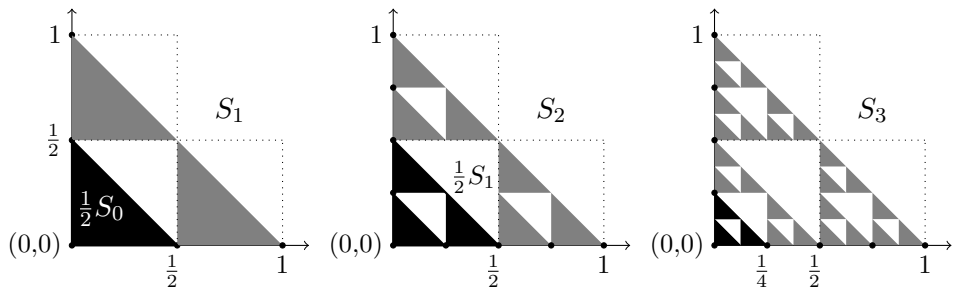
\begin{figure}
	\hskip - .2 cm
	\begin{tikzpicture} [scale=0.7]
		\draw[black, ->] (0,0) -- (4.5,0);
		\draw[black, ->] (0,0) -- (0, 4.5);

		\draw[fill=black]  (0,2)  circle(0.05) node[black, left]{ \small{$\frac 12$} };
		
		\draw[fill=black]  (0,0)  circle(0.05) node[black, left]{ \small{(0,0)}};
		\draw[fill=black]  (0,4)  circle(0.05) node[black, left]{ \small{1}};
		\draw[fill=black]  (2,0)  circle(0.05);
		\draw (1.9, 0) node[black, below ]{ \small{$\frac 12$} }   ;
		
		\fill[gray] (2,0)--(4,0)--(2,2)--cycle; 
		\fill[black] (0,0)--(2,0)--(0,2)--cycle;
		
		\fill[gray] (0,4) -- (2,2) -- (0,2) -- cycle; 
		
		\draw[black, dotted ] (2,0) -- (2,4);
		\draw[black, dotted] (4,0) -- (4, 2 );
		\draw[black, dotted] (0,2) -- (4,2);
		\draw[black, dotted] (0,4) -- (2, 4 );
		\draw[fill=black]  (4,0)  circle(0.05) node[black, below]{ \small{1} };
		\draw (-.1, .7) node[white, right] { {\bf $\frac 12S_0 $}};
		\draw[black, thick] (0,0)--(2,0)--(0,2)--cycle;
		\draw (3,  3) node[black, below] { {\bf $S_1$}};
	\end{tikzpicture}
	\begin{tikzpicture}[scale=0.7]
		\draw[black, ->] (0,0) -- (4.5,0);
		\draw[black, ->] (0,0) -- (0, 4.5);
		\fill[gray] (0,4) -- (1,3) -- (0,3) -- cycle;
		\fill[gray] (0,3) -- (1,2) -- (0,2) -- cycle;
		
		\fill[gray] (1,2 ) -- (2,2) -- (1,3) -- cycle;
		
		\fill[gray] (2,2) -- (3,1) -- (2,1) -- cycle;
		\fill[gray] (2,1) -- (3,0) -- (2,0) -- cycle;
		\fill[gray] (3,0 ) -- (4,0) -- (3,1) -- cycle;
		\fill[black] (0,0)--(2,0)--(0,2)--cycle; 
		\fill[white] (1,0)--(1,1)--(0,1)--cycle;  
		
		\draw[black, dotted ] (2,0) -- (2,4);
		\draw[black, dotted] (4,0) -- (4, 2);
		\draw[black, dotted] (0,2) -- (4,2);
		\draw[black, dotted] (0,4) -- (2, 4 );
		
		\draw[fill=black]  (0,0)  circle(0.05) node[black, left]{ \small{(0,0)}};
		\draw[fill=black]  (0,1)  circle(0.05) 
		;
		\draw[fill=black]  (0,2)  circle(0.05); 
		\draw[fill=black]  (0,3)  circle(0.05);  
		\draw[fill=black]  (1,0)  circle(0.05) 
		;
		\draw[fill=black]  (0,4)  circle(0.05) node[black,  left]{ {\small 1}};
		\draw[fill=black]  (2,0)  circle(0.05); 
		\draw[fill=black]  (3,0)  circle(0.05);  
		\draw[fill=black]  (4,0)  circle(0.05)node[black,  below]{ {\small 1}};  
		\draw[black, thick] (0,0)--(2,0)--(0,2)--cycle;
		\draw (3,  3) node[black, below] { {\bf $S_2$}};
		\draw (1.9, 0) node[black, below ]{ \small{$\frac 12$} }   ;
		\draw (.9,  1.3) node[black, right] {\small{\bf $\frac 12S_1 $}};
	\end{tikzpicture}
	\begin{tikzpicture}[scale=0.7]
		\draw[black, ->] (0,0) -- (4.5,0);
		\draw[black, ->] (0,0) -- (0, 4.5);
		
		\newcommand{\smallfigure}{\fill[black] (2,0)--(4,0)--(2,2)--cycle; 
			\fill  (0,0)--(2,0)--(0,2)--cycle;
			
			\fill  (0,4) -- (2,2) -- (0,2) -- cycle; }
		\newcommand{\Myfigure} {
			\fill[gray] (0,4) -- (1,3) -- (0,3) -- cycle;
			\fill[gray] (0,3) -- (1,2) -- (0,2) -- cycle;
			
			\fill[gray] (1,2 ) -- (2,2) -- (1,3) -- cycle;
			
			\fill[gray] (2,2) -- (3,1) -- (2,1) -- cycle;
			\fill[gray] (2,1) -- (3,0) -- (2,0) -- cycle;
			\fill[gray] (3,0 ) -- (4,0) -- (3,1) -- cycle;
			\fill[gray] (0,0)--(2,0)--(0,2)--cycle; 
			\fill[white] (1,0)--(1,1)--(0,1)--cycle; } 

		\foreach \x/\y in {0/0, 2/0,   0/2} {
			\begin{scope}[shift={(\x,\y)}, scale=1/2]
				\Myfigure
		\end{scope}}
		
		\foreach \x/\y in {0/0, 2/0,   0/2}{
			\begin{scope}[scale=1/4, fill=black]
				black]
				\smallfigure
		\end{scope} }
		
		\draw[black, dotted ] (2,0) -- (2,4);
		\draw[black, dotted] (4,0) -- (4, 2);
		\draw[black, dotted] (0,2) -- (4,2);
		\draw[black, dotted] (0,4) -- (2, 4 );
		
		\draw[fill=black]  (0,0)  circle(0.05) node[black, left]{ \small{(0,0)}};
		\draw[fill=black]  (0,1)  circle(0.05) 
		;
		\draw[fill=black]  (0,2)  circle(0.05); 
		\draw[fill=black]  (0,3)  circle(0.05);  
		\draw[fill=black]  (1,0)  circle(0.05) 
		;
		\draw[fill=black]  (0,4)  circle(0.05) node[black,  left]{ {\small 1}};
		\draw[fill=black]  (2,0)  circle(0.05); 
		\draw[fill=black]  (3,0)  circle(0.05);  
		\draw[fill=black]  (4,0)  circle(0.05)node[black,  below]{ {\small 1}};  
		\draw (3,  3) node[black, below] { {\bf $S_3$}};
		\draw (1.9, 0) node[black, below ]{ \small{$\frac 12$} }   ;
		\draw ( .9, 0) node[black, below ]{ \small{$\frac 14$} }   ;
	\end{tikzpicture}
	\caption{ \small{   The first steps of the construction of the Sierpiński triangle }}
\end{figure}

The Sierpiński triangle  is   defined as 
$  \dsize S_T := \bigcap_{j =1}^{\infty} S_j.$  

Note that $S_T$ is closed because it is intersection of closed sets. 
For every  $  i\ge 0$, 
we can  denote by $ \tau_i(P)  $   the   triangle  in the set $S_i$ that contains $P$  and   with   $v_i(P)$  the lower left vertex of $ \tau_i(P)$.    Since $P=\bigcap_{j=1}^{\infty} \tau_j(P)$, we   have  that $\dsize P=\lim_{j\to\infty}v_j(P)$.  
The sequence $\{v_j(P)\}_{j\in\N}$ can be used to identify the position of $P$ in  $S_T$; for details,  see e.g.  the construction of the Sierpiński triangle in \cite{S}.


\medskip
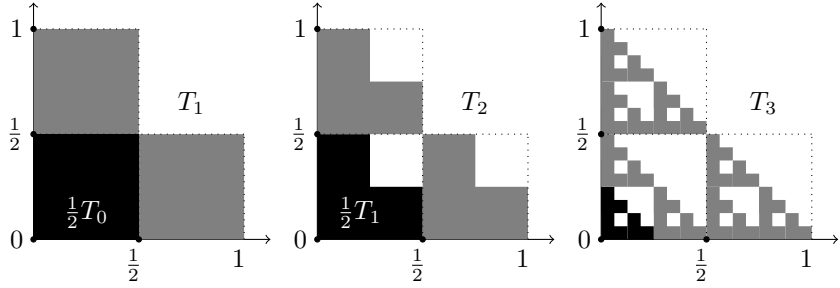
\begin{figure}
	
	\begin{tikzpicture}[scale=0.7]
		\draw[black, ->] (0,0) -- (4.5,0);
		\draw[black, ->] (0,0) -- (0, 4.5);

		\fill[gray] (0,0)-- (4,0) -- (4,2) -- (2,2) -- (2,4)-- (0,4)--cycle;
		\fill[black](0,0)-- (2,0) -- (2,2) -- (0,2)--cycle;
		
		\draw[black, dotted ] (2,0) -- (2,4);
		\draw[black, dotted] (4,0) -- (4, 2 );
		\draw[black, dotted] (0,2) -- (4,2);
		\draw[black, dotted] (0,4) -- (2, 4 );
		
		\draw (1,1) node[white, below ]{ \small{$\frac 12 \Q_0 $} } ; 
		\draw[fill=black]  (0,2)  circle(0.05) node[black, left]{ \small{$\frac 12$} };
		
		\draw[fill=black]  (0,0)  circle(0.05) node[black, left]{ \small{0}};
		\draw[fill=black]  (0,4)  circle(0.05) node[black, left]{ \small{1}};
		\draw[fill=black]  (2,0)  circle(0.05);
		\draw (1.9, 0) node[black, below ]{ \small{$\frac 12$} } ; 
		\draw (3.9, 0) node[black, below ]{ \small{$1$} };
		\draw (3,3) node[black, below]{ \small{$\Q_1$} };
	\end{tikzpicture}
	\begin{tikzpicture}[scale=0.7]
		\draw[black, ->] (0,0) -- (4.5,0);
		\draw[black, ->] (0,0) -- (0, 4.5);

		\fill[black] (0,0)-- (2,0) -- (2,1) -- (1,1) -- (1,2)-- (0,2)--cycle;
		\fill[gray] (2,0)-- (4,0) -- (4,1) -- (3,1) -- (3,2)-- (2,2)--cycle;
		\fill[gray] (0,2)-- (2,2) -- (2,3) -- (1,3) -- (1,4)-- (0,4)--cycle;
		
		\draw[black, dotted ] (2,0) -- (2,4);
		\draw[black, dotted] (4,0) -- (4, 2 );
		\draw[black, dotted] (0,2) -- (4,2);
		\draw[black, dotted] (0,4) -- (2, 4 );
		
		\draw (.8,1) node[white, below ]{ \small{$\frac 12\Q_1 $} } ; 
		\draw[fill=black]  (0,2)  circle(0.05) node[black, left]{ \small{$\frac 12$} };
		
		\draw[fill=black]  (0,0)  circle(0.05) node[black, left]{ \small{0}};
		\draw[fill=black]  (0,4)  circle(0.05) node[black, left]{ \small{1}};
		\draw[fill=black]  (2,0)  circle(0.05);
		\draw (1.9, 0) node[black, below ]{ \small{$\frac 12$} } ; 
		\draw (3.9, 0) node[black, below ]{ \small{$1$} };
		\draw (3,3) node[black, below]{ \small{$\Q_2$} };
	\end{tikzpicture}
	\begin{tikzpicture}[scale=0.7]
		\draw[black, ->] (0,0) -- (4.5,0);
		\draw[black, ->] (0,0) -- (0, 4.5);

		\newcommand{\myshape}{
			\fill[gray] (0,0)-- (1/2,0) -- (1/2,1/4) -- (1/4,1/4) -- (1/4,1/2)-- (0,1)--cycle;
			\fill[gray] (1/2,0)-- (1,0) -- (1,1/4) -- (3/4,1/4) -- (3/4,1/2)-- (1/2,1/2)--cycle;
			\fill[gray] (0,1/2)-- (1/2,1/2) -- (1/2,3/4) -- (1/4,3/4) -- (1/4,1)-- (0,1)--cycle;
		}
		
		\begin{scope}
			\fill[black] (0,0)-- (1/2,0) -- (1/2,1/4) -- (1/4,1/4) -- (1/4,1/2)-- (0,1)--cycle;
			\fill[black] (1/2,0)-- (1,0) -- (1,1/4) -- (3/4,1/4) -- (3/4,1/2)-- (1/2,1/2)--cycle;
			\fill[black] (0,1/2)-- (1/2,1/2) -- (1/2,3/4) -- (1/4,3/4) -- (1/4,1)-- (0,1)--cycle;
		\end{scope}
		
		\begin{scope}[shift={(0,1)}]
			\myshape
		\end{scope}

		\begin{scope}[shift={(1,0)}]
			\myshape
		\end{scope}
		
		\begin{scope}[shift={(2,0)}]
			\myshape
		\end{scope}
		\begin{scope}[shift={(3,0)} ]
			\myshape
		\end{scope}
		\begin{scope}[shift={(2,1)} ]
			\myshape
		\end{scope}
		\begin{scope}[shift={(0,2)} ]
			\myshape
		\end{scope}
		\begin{scope}[shift={(0,3 )} ]
			\myshape
		\end{scope}
		\begin{scope}[shift={(1,2)} ]
			\myshape
		\end{scope}
		\draw[black, dotted ] (2,0) -- (2,4);
		\draw[black, dotted] (4,0) -- (4, 2 );
		\draw[black, dotted] (0,2) -- (4,2);
		\draw[black, dotted] (0,4) -- (2, 4 );
		\draw[fill=black]  (0,2)  circle(0.05) node[black, left]{ \small{$\frac 12$} };
		
		\draw[fill=black]  (0,0)  circle(0.05) node[black, left]{ \small{0  }};
		\draw[fill=black]  (0,4)  circle(0.05) node[black, left]{ \small{1}};
		\draw[fill=black]  (2,0)  circle(0.05);
		\draw (1.9, 0) node[black, below ]{ \small{$\frac 12$} } ; 
		\draw (3.9, 0) node[black, below ]{ \small{$1$} };
		\draw (3,3) node[black, below]{ \small{ $\Q_3$} };
		
	\end{tikzpicture}
	
	\caption{ \small{    Construction of  the Sierpiński triangle using squares}}
\end{figure}
The  Sierpiński triangle can also be constructed as the limit of a decreasing sequence of union of squares.    We let $T(0)=Q=[0,1]\times[0,1]$,  and
\begin{equation}\label{e-def-Tj} 
	T(j+1) = \bigcup_{h, k \in\{0,1\} \atop{  k +h\leq 1}}\hskip -.5 cm \mbox{$(\frac 12  T(j)+ (\frac k2, \, \frac h2))$}, \quad  j\ge 0 .
\end{equation}
%
See Figure 3.  Let us show that $S_T= \bigcap_{j =1}^{\infty} T(j)$;  recall that each point $P\in S_T$ lies at the intersection of a decreasing sequence of triangles  $ \{ \tau_i(P) \}_{i \in \mathbb{N}} $,  with  $ \tau_i(P) \subset S_i $, where the $S_i$ are defined in  \eqref{e-def-Sj}.
Our construction shows that for  each triangle $ \tau_i(P) $    there exist   squares $q_j(P) \subset T(j)$ and $ q_{j+1}(P)\subset T(j+1) $, both containing $P$,  for which 
$$
q_{i+1 }(P) \subset \tau_i(P) \subset q_{i }(P).  
$$
(see Figures 2 and 3).
Thus, $P=  \bigcap_{j =0}^{\infty} q_j(P)$ and $S_T=\bigcap_{j =0}^{\infty} T(j)$, as  required.

\medskip

In order to  characterize  the points in the Sierpiński triangle in terms of their base $2$ representations,
we  need the following  

\begin{Lemma}\label{L-1} 
	For every $n \geq 0$ and $m \geq 2$,  the  points $(x, y) \in \frac{1}{m^n}Q$ have base $m$ representation 
	\begin{equation}\label{e-base 1}
		x = [x_0. x_1 x_2  \cdots ]_m, \quad y = [x_0. y_1 y_2 \cdots ]_m,\ \mbox{with $x_i = y_i = 0$  when  $ i \leq n$}.
	\end{equation}
	Conversely, every  point $(x, y) $ for which \eqref{e-base 1} holds belongs to $\frac{1}{m^n}Q$
\end{Lemma}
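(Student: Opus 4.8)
The plan is to reduce the two-dimensional statement to a one-dimensional one and then exploit the shift-by-powers-of-$m$ rule recalled earlier in Section~2. Since $\frac{1}{m^n}Q = [0, m^{-n}] \times [0, m^{-n}]$ is a Cartesian product and the prescribed digit conditions on $x$ and $y$ are independent of each other, it suffices to prove the following single-variable equivalence and then apply it separately to $x$ and to $y$: a real number $t$ lies in $[0, m^{-n}]$ if and only if it admits a base $m$ representation $t = [0.\underbrace{0\cdots 0}_{n}\, t_{n+1} t_{n+2}\cdots]_m$ with $t_i = 0$ for all $i \leq n$.

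For the forward implication, given $t \in [0, m^{-n}]$ I would set $z = m^n t$, so that $z \in [0,1]$. Every $z \in [0,1]$ possesses a base $m$ representation with vanishing integer part, namely $z = [0.\,a_1 a_2 \cdots]_m$: for $z \in [0,1)$ this is the ordinary (greedy) expansion, while for the endpoint $z = 1$ one uses the non-terminating expansion $[0.\,(m-1)(m-1)\cdots]_m$, whose value is $\sum_{i \geq 1}(m-1)m^{-i} = 1$. Dividing by $m^n$ and invoking the shift rule from Section~2, which inserts $n$ zeros immediately after the point, gives $t = \frac{1}{m^n} z = [0.\underbrace{0\cdots 0}_{n}\, a_1 a_2 \cdots]_m$. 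This is exactly a representation with $t_0 = t_1 = \cdots = t_n = 0$ and $t_{n+j} = a_j$, as required.

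The converse is the easy direction: if $t = [0.\underbrace{0\cdots0}_{n}\, t_{n+1} t_{n+2}\cdots]_m$, then the shift rule gives $m^n t = [0.\,t_{n+1} t_{n+2}\cdots]_m$, and any number of this form is bounded above by $\sum_{i \geq 1}(m-1)m^{-i} = 1$ and below by $0$; hence $m^n t \in [0,1]$ and $t \in [0, m^{-n}]$. Applying both implications coordinatewise yields the lemma, since the conditions $x_0 = \cdots = x_n = 0$ and $y_0 = \cdots = y_n = 0$ precisely describe membership of $(x,y)$ in $[0, m^{-n}]^2 = \frac{1}{m^n}Q$.

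The only genuinely delicate point—and the step I would be most careful about—is the non-uniqueness of base $m$ representations at boundary values. For instance the endpoint $t = m^{-n}$ has the terminating expansion in which $t_n = 1$, which violates the condition $t_i = 0$ for $i \leq n$; the claim survives only because the \emph{existence} of a suitable representation is asserted, and for such exceptional points one must deliberately select the non-terminating representative (here $[0.\underbrace{0\cdots0}_{n}(m-1)(m-1)\cdots]_m$). Once the correct representative is chosen in the forward direction, the argument is pure bookkeeping with the shift operation and the geometric-series bound, so no further obstacle arises.
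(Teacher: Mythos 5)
Your proposal is correct and follows essentially the same route as the paper's proof: reduce to the case $n=0$ by the digit-shift rule for multiplication by powers of $m$, bound $[0.\,t_1t_2\cdots]_m$ in $[0,1]$ by the geometric series, and handle the endpoint by choosing the non-terminating representation $[0.\,(m-1)(m-1)\cdots]_m$. Your explicit remark about selecting the non-terminating representative at boundary points such as $t=m^{-n}$ is a welcome clarification of a subtlety the paper treats only for the point $1$ itself, but it does not change the argument.
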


\begin{proof}
	Since multiplying  by $m^{-n}$ shifts the decimal point $n$ places to the right in base $m$ representation,  it suffices to prove the lemma for $n=0$. 
	
	Any number  $x = [0.x_1 x_2  \cdots]_m$  lies in the interval $[0, 1]$ because 
	\begin{align*}0\leq 
		x &= \frac{x_1}{m } + \frac{x_2}{m^2} + \cdots  \leq \frac{m-1}{m } + \frac{m-1}{m^2} + \cdots \\  &=\frac{m-1}{m } \cdot \frac{1}{1 - \frac{1}{m}} = 1.
	\end{align*}
	Conversely,    
	the base $m$ representation   of any   $x\in  [0, \, 1)$  is  necessarily in the form   $x = [0.  x_1 x_1 \cdots]_m $, and   $x=1$  has  also the infinite decimal representation $1= [0. \, (m-1)\,(m-1)\cdots ] _m$. 
	We can conclude   that  $(x, y) \in Q $  if and only if  $x$ and $y$ have  a base $m$ representation   of the form 
	$  
	x = [0.  x_1 x_1 \cdots]_m, \quad y = [0.y_1 y_2 \cdots]_m $,  and the Lemma is proved.
\end{proof}

\begin{Thm}\label{T-Main}
	The point   $(x,y) \in Q$   lies in the  Sierpiński triangle $S_T  $ if and only if   the  base $2$ representations  of $x$ and $y$ satisfies 
	\begin{equation}\label{e-cond-Tn}
		0\leq [x_i]_2 + [y_i]_2 \leq 1  
	\end{equation}  for every $i\in\N$. 
\end{Thm}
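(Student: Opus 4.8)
The plan is to prove the equivalence at each finite stage of the square construction \eqref{e-def-Tj} and then pass to the limit. Concretely, I would prove by induction on $j\ge 0$ the claim that $(x,y)\in T(j)$ if and only if $x$ and $y$ admit base $2$ representations $x=[0.x_1x_2\cdots]_2$, $y=[0.y_1y_2\cdots]_2$ satisfying $[x_i]_2+[y_i]_2\le 1$ for every $i$ with $1\le i\le j$. Since in base $2$ the digits are nonnegative, the lower bound in \eqref{e-cond-Tn} is automatic, so only the upper bound carries content. Once this claim is established, intersecting over all $j$ and using $S_T=\bigcap_{j\ge 0}T(j)$ (established above) yields the theorem.

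For the base case $j=0$ we have $T(0)=Q$, the constraint on the digits is vacuous, and Lemma~\ref{L-1} with $n=0$ gives exactly that $(x,y)\in Q$ is equivalent to $x$ and $y$ having representations of the form $[0.x_1x_2\cdots]_2$ and $[0.y_1y_2\cdots]_2$. For the inductive step I would unwind \eqref{e-def-Tj}: a point lies in $T(j+1)$ precisely when, for some pair $(k,h)\in\{0,1\}^2$ with $k+h\le 1$, the rescaled point $(2x-k,\,2y-h)$ lies in $T(j)$. The key computation is that choosing the leading binary digits $x_1=k$ and $y_1=h$ corresponds exactly to this rescaling: if $x=[0.x_1x_2\cdots]_2$ then $2x-x_1=[0.x_2x_3\cdots]_2$, so the digits simply shift by one position. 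Hence the admissible pairs $(k,h)$ are those with $k+h\le 1$, which is \eqref{e-cond-Tn} at position $i=1$, while the inductive hypothesis applied to $(2x-k,\,2y-h)$ supplies the condition at positions $i=2,\dots,j+1$. Combining the two gives the claim for $j+1$.

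The main obstacle is the non-uniqueness of the binary expansion at dyadic rationals, which is exactly what lets a point on a shared edge of two quarter-squares lie in $T(j)$ in more than one way. When $x=\tfrac12$, for instance, both representations $[0.1000\cdots]_2$ and $[0.0111\cdots]_2$ are available, corresponding to the two choices $k=1$ and $k=0$; since $T(j)$ is a union of \emph{closed} squares, such a point is genuinely shared. I would therefore read the equivalence as the \emph{existence} of representations satisfying \eqref{e-cond-Tn}, and in the inductive step select, for a boundary point, a pair $(k,h)$ naming a quarter-square that actually contains the point. This is the one place demanding care; away from the lines $x\in\{0,\tfrac12,1\}$ and $y\in\{0,\tfrac12,1\}$ the pair $(k,h)$ is forced and the argument is automatic. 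Passing to the intersection over $j$ then completes the proof.
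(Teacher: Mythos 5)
Your proposal follows the same basic route as the paper: both arguments run an induction on the stages $T(j)$ of the square construction \eqref{e-def-Tj}, using the fact that scaling by $\tfrac12$ shifts the binary digits and that translating by $(\tfrac{k}{2},\tfrac{h}{2})$ sets the leading digit, so that the admissible translations $k+h\le 1$ are exactly condition \eqref{e-cond-Tn} at position $i=1$. Where you differ is in the logical shape of the induction and in the passage to the limit. The paper proves two one-sided statements --- every point of $T(n)$ satisfies \eqref{e-cond-Tn} for $i\le n$, and $T(n)$ contains every point whose digits satisfy \eqref{e-cond-Tn} and vanish beyond position $n$ --- and then recovers the converse inclusion by approximating an arbitrary admissible point $P$ by its finite-decimal truncations $P_n$ and invoking the closedness of $S_T$. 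You instead prove a clean ``if and only if'' at each finite stage and intersect. Your version is tidier and, commendably, is the only one of the two that squarely confronts the non-uniqueness of binary expansions at dyadic rationals; the paper is essentially silent about which representation is meant.

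There is, however, one small but real omission in your final step. Because you (correctly) read the stage-$j$ statement existentially, ``intersecting over all $j$'' in the forward direction gives you only this: for every $j$ there is \emph{some} pair of representations of $x$ and $y$ satisfying \eqref{e-cond-Tn} for $i\le j$. That does not formally produce a single pair satisfying the condition for all $i$, since the witnessing pair could a priori change with $j$. The fix is one line: each of $x$ and $y$ has at most two binary expansions, so there are at most four candidate pairs; for each $j$ at least one of them works up to position $j$, hence by pigeonhole some fixed pair works for infinitely many $j$, and since ``works up to position $j$'' is monotone in $j$, that pair works for every $i$. With that sentence added your argument is complete; the paper's detour through finite decimals and the closedness of $S_T$ is precisely its way of sidestepping this point.
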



\begin{proof}
	We have shown that the Sierpiński triangle can be written as
	$
	S_T = \bigcap_{n=1}^\infty   T(n)
	$
	where the sets $ T(n)$ are defined  in \eqref{e-def-Tj}.
	
	We prove by induction on $n$ that every point $(x, y) \in  T(n)$ satisfies
	\eqref{e-cond-Tn} 
	for all $1\leq i\leq n$, starting with  the base case $n=1$.
	
	\medskip
	By Lemma~\ref{L-1}, every point $(x, y) \in \frac{1}{2} Q = [0, \tfrac{1}{2}] \times [0, \tfrac{1}{2}]$ has base $2$ representation of the form
	$
	x = [0.0 x_2 x_3 \cdots]_2, \quad y = [0.0 y_2 y_3 \cdots]_2.
	$
	Since adding $\tfrac{1}{2}$  only affects  the first decimal digit of the base 2  representation,  the set $
	T(1)= \tfrac{1}{2} Q \cup \left( \tfrac{1}{2} Q + (0, \tfrac{1}{2}) \right) \cup \left( \tfrac{1}{2} Q + (\tfrac{1}{2}, 0) \right)
	$
	consists of    points $(x, y)$  for which $0 \leq [x_1]_2 + [y_1]_2 \leq 1$. In particular, $ T(1)$ contains all points  $(x,y)$  with 
	$
	x = [0.x_1 0 0 \cdots]_2, \quad y = [0.y_1 0 0 \cdots]_2 $, and $0\leq  [x_1]_2 + [y_1]_2 \leq 1.
	$
	
	\medskip  
	
	We now assume that the points $ (x, y) \in T(n) $ satisfy \eqref{e-cond-Tn} for all $ 1 \leq i \leq n $. 
	We also assume that $ T(n) $ contains all points $ (x, y) \in Q $ that satisfy the additional condition  
	$
	[x_k]_2 = [y_k]_2 = 0 \quad \text{for all } k \geq n+1.
	$
	
	Scaling $ T(n) $ by $ \tfrac{1}{2} $ shifts the decimal point in the base 2 representation. 
	Therefore, the points $ (x, y) \in \tfrac{1}{2} T(n) $ are of the form
	\[
	x = [0.0 x_2 x_3 \cdots]_2, \quad y = [0.0 y_2 y_3 \cdots]_2,
	\]
	with the condition $ 0 \leq x_j + y_j \leq 1 $ for all $ 2 \leq j \leq n+1 $. 
	
	Furthermore, $ \tfrac{1}{2} T(n) $ contains all points $ (x, y) $ that satisfy the additional condition 
	$
	[x_k]_2 = [y_k]_2 = 0 \quad \text{for all } k \geq n+2.
	$
	
	Thus,
	$
	T(n+1) = \tfrac{1}{2} T(n) \cup \left( \tfrac{1}{2} T(n) + (0, \tfrac{1}{2}) \right) 
	\cup \left( \tfrac{1}{2} T(n) + (\tfrac{1}{2}, 0) \right)
	$
	contains all points $ (x, y) $ that satisfy \eqref{e-cond-Tn} for $ 1 \leq i \leq n+1 $, 
	as well as all points that satisfy   the additional condition  $ [x_k]_2 = [y_k]_2 = 0 $ for all $ k \geq n+2 $

	\medskip	
	The Sierpiński  triangle $S_T$ is the intersection of  the   $T(n)$,    and so  every point $(x, y) \in S_T$ satisfies \eqref{e-cond-Tn} for all $i \in \mathbb{N}$.
	
	Conversely, assume that   $P = (a, b)$ satisfies \eqref{e-cond-Tn} for all $i \in \mathbb{N}$; for every $n\in \N$ we can let   $P_n=(X_n,\,Y_n)$  with 
	$ X_n= [0.a_1 a_2 \cdots a_n \, 0\, 0 \cdots]_2 $, $Y_n= \ [0.b_1 b_2 \cdots b_n\, 0\, 0 \cdots]_2 $.
	
	We have proved that $P_n \in T(k)$ for all $k \geq n$,  and so  $P_n \in S_T$.  Since 
	$\dsize\lim_{n\to\infty} P_n=P$  and $S_T $ is closed, it follows that $P \in S_T$.
\end{proof}

\medskip
The construction of the Sierpiński triangle and the proof of Theorem~\ref{T-Main} can be extended to a broader class of planar fractals.

Let  $m\ge 2$ be a given base. We   let $T_m(0)=Q$  and  for every $j\ge 0$, we  define
\begin{equation}\label{e-Qn}  T_m (j+1)=\bigcup_{h, k \ge 0 \atop{0\leq k +h\leq m-1}}\hskip -.5 cm \mbox{$(\frac 1m  T_m(j)+ (\frac km, \, \frac hm))$}. 
\end{equation} 
The set $T_m(1)$ contains 
$ s_m:= m+ (m-1)+\cdots +1 = \frac{m(m+1)}{2}$squares of side $\frac 1m, 
$ (see Figure 4) and  we can verify  by induction that $T _m(n)$ contains $(s_m)^n $ squares of side $\frac {1}{m^n}$.  
Our construction shows that the sequence $\{T_m(n)\}_{n\in\N}$ is decreasing.

We define   the {\it $m$-th   fractal triangle } as   the set  $$\mathcal{T}_m := \bigcap_{n=1}^\infty T_m(n).
$$
In this notation, the classical Sierpiński triangle is   ${\mathcal T}_2$.

\bigskip
\begin{figure}
	\begin{tikzpicture}[scale=0.5]
		\draw[black, ->] (0,0) -- (6.5,0);
		\draw[black, ->] (0,0) -- (0, 6.5);
		\fill[gray] (0,0)-- (6,0) -- (6,2) -- (4,2) -- (4,4)-- (2,4)--(2,6)--(0,6)--cycle;
		
		\draw[fill=black]  (0,0)  circle(0.05) node[black, left]{ \small{0}};
		\draw[fill=black]  (0,2)  circle(0.05) node[black, left]{ \small{$\frac 13$}};
		\draw[fill=black]  (0,4)  circle(0.05) node[black, left]{ \small{$\frac 23$}};
		\draw[fill=black]  (0,6)  circle(0.05) node[black, left]{ \small{$  1$}};
		\draw[fill=black]  (2,0)  circle(0.05);
		\draw (1.9, 0) node[black, below ]{ \small{$\frac 13$} } ; 
		\draw (3.9, 0) node[black, below ]{ \small{$\frac 23$} };
		\draw (5.9, 0) node[black, below ]{ \small{$1$} };

		\draw[black, thick, dotted] (0,2) -- (6,2);
		\draw[black,thick,dotted] (0,4) -- (4,4);
		\draw[black,thick,dotted] (2,0) -- (2,6);
		\draw[black,thick,dotted] (4,0) -- (4,4);
		\draw (4.9 ,4.9) node[black, below]{ \small{$T_3(1)$} };
		\draw[black, fill] (0,0)--(2,0)--(2,2)--(0, 2)--cycle;
		\draw ( 1,1.5) node[white, below]{ \small{$\frac 13 Q$} };
	\end{tikzpicture}
	\begin{tikzpicture}[scale=0.5]
		\draw[black, ->] (0,0) -- (6.5,0);
		\draw[black, ->] (0,0) -- (0, 6.5);

		\newcommand{\myshape}{
			\fill[gray] (0,0)-- (6,0) -- (6,2) -- (4,2) -- (4,4)-- (2,4)--(2,6)--(0,6)--cycle;
			
		}
		\newcommand{\Myshape }   {
			\foreach \x/\y in {0/0, 2/0, 4/0, 0/2, 2/2, 0/4} {
				\begin{scope}[shift={(\x,\y)}, scale=1/3]
					\myshape
				\end{scope}
		}}
		\begin {scope} 
		\Myshape 
	\end{scope}	
	\fill[black] (0,0)-- (2 ,0) -- (2 ,2/3) -- (4/3,2/3) -- (4/3, 4/3)-- (2/3,4/3)-- (2/3,2)--(0,2)--cycle;
	\draw[fill=black]  (0,0)  circle(0.05) node[black, left]{ \small{0}};
	\draw[fill=black]  (0,2)  circle(0.05) node[black, left]{ \small{$\frac 13$}};
	\draw[fill=black]  (0,4)  circle(0.05) node[black, left]{ \small{$\frac 23$}};
	\draw[fill=black]  (0,6)  circle(0.05) node[black, left]{ \small{$  1$}};
	\draw[fill=black]  (2,0)  circle(0.05);
	\draw (1.9, 0) node[black, below ]{ \small{$\frac 13$} } ; 
	\draw (3.9, 0) node[black, below ]{ \small{$\frac 23$} };
	\draw (5.9, 0) node[black, below ]{ \small{$1$} };

	\draw[black, thick, dotted] (0,2) -- (6,2);
	\draw[black,thick,dotted] (0,4) -- (4,4);
	\draw[black,thick,dotted] (2,0) -- (2,6);
	\draw[black,thick,dotted] (4,0) -- (4,4);
	\draw[black,thick,dotted] (6,0) -- (6,2);
	\draw[black,thick,dotted] (0,6) -- (2,6);
	\draw (4.9 ,4.9) node[black, below]{ \small{$T_3(2)$} };
	\draw ( .85,1 ) node[white, thick,below]{ \tiny{$\frac 13T_3(1)$} };
	
\end{tikzpicture}
\begin{tikzpicture}[scale=0.5]
	\draw[black, ->] (0,0) -- (6.5,0);
	\draw[black, ->] (0,0) -- (0, 6.5);

	\newcommand{\myshape}{
		\fill  (0,0)-- (6,0) -- (6,2) -- (4,2) -- (4,4)-- (2,4)--(2,6)--(0,6)--cycle;
		
	}
	\newcommand{\Myshape }   {
		\foreach \x/\y in {0/0, 2/0, 4/0, 0/2, 2/2, 0/4} {
			\begin{scope}[shift={(\x,\y)}, scale=1/3, fill=gray]
				\myshape
			\end{scope}
	}}
	
	\newcommand{\Mynewshape }   {
		\foreach \x/\y in {0/0, 2/0, 4/0, 0/2, 2/2, 0/4} {
			\begin{scope}[shift={(\x,\y)}, scale=1/3, fill=gray]
				\Myshape
			\end{scope}
	}}
	\begin {scope} [fill=black]
	\Mynewshape 
\end{scope}	

\draw[fill=black]  (0,0)  circle(0.05) node[black, left]{ \small{0}};
\draw[fill=black]  (0,2)  circle(0.05) node[black, left]{ \small{$\frac 13$}};
\draw[fill=black]  (0,4)  circle(0.05) node[black, left]{ \small{$\frac 23$}};
\draw[fill=black]  (0,6)  circle(0.05) node[black, left]{ \small{$  1$}};
\draw[fill=black]  (2,0)  circle(0.05);
\draw (1.9, 0) node[black, below ]{ \small{$\frac 13$} } ; 
\draw (3.9, 0) node[black, below ]{ \small{$\frac 23$} };
\draw (5.9, 0) node[black, below ]{ \small{$1$} };

\draw[black, thick, dotted] (0,2) -- (6,2);
\draw[black,thick,dotted] (0,4) -- (4,4);
\draw[black,thick,dotted] (2,0) -- (2,6);
\draw[black,thick,dotted] (4,0) -- (4,4);
\draw[black,thick,dotted] (6,0) -- (6,2);
\draw[black,thick,dotted] (0,6) -- (2,6);
\draw (5 ,4.9) node[black, below]{ \small{$T_3(3)$} };
%


\end{tikzpicture}
\caption{  The first steps of the construction of  the fractal triangle  ${\mathcal T}^{(3)}$  }
\end{figure}
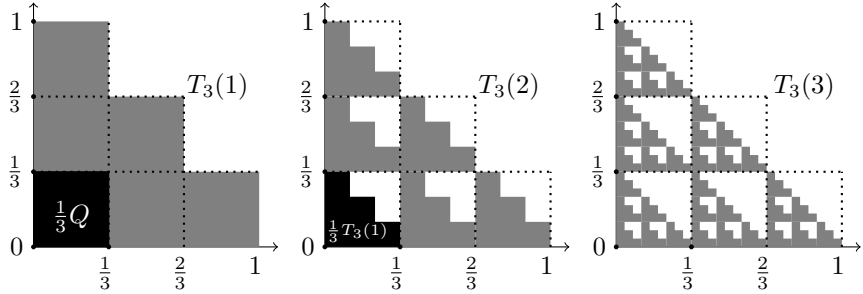

The following theorem allows to  characterize  the points of ${\mathcal T}_m$  in terms of the  base $m$ representation  of their coordinates. 

\begin{Thm}\label{T-Main-m}
The point    $(x,y)\in Q $   lies in the   triangle fractal  ${\mathcal T}_m $ if and only if   
the  base $m$ representations  of $x$ and $y$ satisfies 
\begin{equation}\label{e-cond-m}
0\leq [x_i]_m+[y_i ]_m \leq m-1\end{equation} 
for every $i\in\N$.
\end{Thm}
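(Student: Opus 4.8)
The plan is to follow the proof of Theorem~\ref{T-Main} almost verbatim, replacing the base $2$ construction by the base $m$ one and the inequality $[x_i]_2+[y_i]_2\le 1$ by \eqref{e-cond-m}. The guiding observation is that the constraint $0\le k+h\le m-1$ in the definition \eqref{e-Qn} of $T_m(j+1)$ is precisely the no-carry condition \eqref{e-no-carry-cond2} applied to the digit pair $(k,h)$ inserted in position $1$. Since $\mathcal{T}_m=\bigcap_{n\ge 1}T_m(n)$, it suffices to understand, digit by digit, which points belong to each $T_m(n)$.

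First I would prove by induction on $n$ the following strengthened statement: every $(x,y)\in T_m(n)$ satisfies \eqref{e-cond-m} for all $1\le i\le n$, and conversely $T_m(n)$ contains every $(x,y)\in Q$ satisfying \eqref{e-cond-m} for $1\le i\le n$ together with the vanishing-tail condition $[x_k]_m=[y_k]_m=0$ for all $k\ge n+1$. This extra hypothesis about points with finite base $m$ representations is what makes the induction close, exactly as in the case $m=2$. For the base case $n=1$, Lemma~\ref{L-1} gives that the points of $\frac1m Q$ have representations $x=[0.0\,x_2x_3\cdots]_m$ and $y=[0.0\,y_2y_3\cdots]_m$; translating by $(k/m,h/m)$ changes only the first digit after the point, replacing it by $k$ and $h$, so the union in \eqref{e-Qn} over $0\le k+h\le m-1$ produces exactly the points with $0\le[x_1]_m+[y_1]_m\le m-1$, settling $n=1$.

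For the inductive step I would scale $T_m(n)$ by $1/m$, which shifts every digit one place to the right, so that the inequalities holding in positions $1,\dots,n$ now hold in positions $2,\dots,n+1$ and the vanishing-tail hypothesis moves from $k\ge n+1$ to $k\ge n+2$. Forming the $s_m$-fold union of \eqref{e-Qn}, the newly introduced first digits $(k,h)$ contribute precisely the inequality in position $1$, yielding the claim for $n+1$. Intersecting over $n$ then shows that every point of $\mathcal{T}_m$ satisfies \eqref{e-cond-m} for all $i\in\N$. For the converse, given $P=(a,b)$ satisfying \eqref{e-cond-m} for every $i$, I would truncate its coordinates after $n$ digits to form $P_n=(X_n,Y_n)$ with finite base $m$ representations; by the strengthened induction $P_n\in T_m(k)$ for all $k\ge n$, hence $P_n\in\mathcal{T}_m$, and since $P_n\to P$ and $\mathcal{T}_m$ is closed, $P\in\mathcal{T}_m$.

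The step I expect to require the most care is the non-uniqueness of base $m$ representations noted in Section~2 (the phenomenon $1=[0.(m-1)(m-1)\cdots]_m$). A point on the boundary of one of the squares of $T_m(n)$ may admit two representations, and only one of them need satisfy \eqref{e-cond-m}; I would therefore read the condition as \emph{there exists} a base $m$ representation obeying the inequalities, and verify that both the inductive construction and the truncation-and-limit argument in the converse remain consistent with this choice. This is the only genuinely delicate point; the rest is the direct digitwise bookkeeping already carried out for $m=2$.
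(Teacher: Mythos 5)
Your proposal follows essentially the same route as the paper's own proof: the explicit base case for $T_m(1)$ via Lemma~\ref{L-1}, the strengthened induction hypothesis (the digit condition for $i\le n$ together with membership of all points with vanishing tails beyond position $n$), and the truncation-plus-closedness argument for the converse. Your closing remark about the non-uniqueness of base $m$ representations addresses a point the paper passes over silently, but it does not change the argument; the approaches coincide.
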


\begin{proof}
%

By Lemma~\ref{L-1}, the points $ (x, y) $ in the square $ \tfrac{1}{m} Q $ have base $ m $ representations of the form
\[
x = [0.0 x_2 x_3 \cdots ]_m, \qquad y = [0.0 y_2 y_3 \cdots ]_m.
\]
Adding $ \tfrac{k}{m} $ to $ x $ or $ y $ only affects the first decimal digit   in their base $ m $ expansions.
In view of the definition~\eqref{e-Qn}, we conclude that $ T_m(1) $ consists of those points $ (x, y) $ for which
$
0 \leq [x_1]_m + [y_1]_m \leq m - 1.
$
Moreover, $ T_m(1) $ contains all points  $(x,y)$ with 
$
x = [0.x_1 00\cdots]_m, \qquad y = [0.y_1 00\cdots]_m,
$ and $
0 \leq [x_1]_m + [y_1]_m \leq m - 1.
$

The argument used in the proof of Theorem~\ref{T-Main} can be extended to show that $ T_m(j) $ consists of points $ (x, y) $ satisfying~\eqref{e-cond-m} for every $ i \leq j $, as well as all the points that satisfy the additional condition 
$
[x_k]_m = [y_k]_m = 0 \quad \text{for all } k \geq j + 1.
$

We can therefore conclude that all points $ (x, y) \in \mathcal{T}_m $ have base $ m $ representations satisfying~\eqref{e-cond-m} for every $ i \in \mathbb{N} $. Furthermore, the argument from the second part of the proof of Theorem~\ref{T-Main} shows that every point $ (x, y) \in Q $ satisfying~\eqref{e-cond-m} for all $ i \in \mathbb{N} $ belongs to $ \mathcal{T}_m $.

\end{proof}

\section{The  hexagon  fractals  } 

In this section  we introduce a new family of   fractals     and we characterize them in terms of  the $b$-balanced base $m$  representation of the coordinates of  their points.   

Given integers $ m > 2 $ and $ b \in \left[1, \frac{m}{2} \right] $, we define
\begin{equation}\label{e-Lmb}
L_{m}^b := \left\{ (k, h) \in \mathbb{Z} \times \mathbb{Z} \ : \ h,\, k,\, h+k \in [-b,\, m - 1 - b] \right\}.
\end{equation}
(See Figure 5).  Let $ H_m^b(0) = Q $, and 
\begin{equation}\label{e-Hn}
H_{m}^b(j+1) = \bigcup_{(h, k) \in L_{m}^b} \left( \tfrac{1}{m} H_m^b(j) + \left( \tfrac{k}{m}, \tfrac{h}{m} \right) \right), \quad   j \geq 0.
\end{equation}

The \emph{$b$-balanced $m$-hexagon fractal} is the set
\[
\mathcal{H}_m^b := \bigcap_{j=1}^\infty H_{m}^b(j).
\]

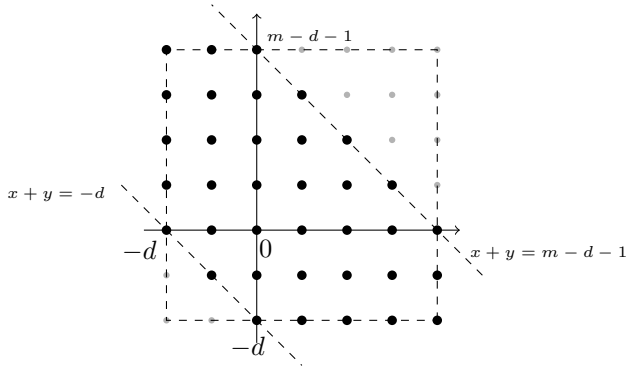
\begin{figure}
\begin{tikzpicture}[scale=0.6]
\def\xmin{-2}
\def\xmax{4}
\def\ymin{-2}
\def\ymax{4}

\foreach \x in {\xmin,...,\xmax} {
	\foreach \y in {\ymin,...,\ymax} {
		\pgfmathtruncatemacro{\s}{\x+\y}
		\ifnum \s<-2
		\fill[gray!60] (\x,\y) circle(2pt); 
		\else
		\ifnum \s>4
		\fill[gray!60] (\x,\y) circle(2pt); 
		\else
		\fill[black] (\x,\y) circle(3pt);     
		\fi
		\fi
	}
}

\draw[->] (\xmin-0.5,0) -- (\xmax+0.5,0)  ;
\draw[->] (0,\ymin-0.5) -- (0,\ymax+0.8)  ;
\draw[black, dashed] (4,-2)--( 4,4)--(-2, 4)--(-2,-2)--cycle; 

\draw (.2, 0) node[black, below]{ \small{$0$} };
\draw (-2.7,0) node[black, below]{ \small{ $-d$} };

\draw (0,4.3 ) node[black, right ]{ \tiny{$m-d-1$} } ; 
\draw (-1,-2.6) node[black, right]{ \small{ $-d$} };
\draw[black, dashed] (5,-1)--(-1,5);
\draw (4.5,-.5 ) node[black, right ]{ \tiny{$x+y=m-d-1$} } ; 
\draw[black, dashed] (-3 , 1)--( 1,-3);
\draw (-3,   .8) node[black, left ]{ \tiny{$x+y= -d$ } } ; 

\end{tikzpicture}
\caption{Points $(x,y)\in [-d, m-d]\times[-d, m -d]$  for which  $-d\leq x+y\leq m-d$}
\end{figure}

The   points in $ L_m^b $ lie within a hexagonal region obtained from the square $ [-b,\, m - b - 1] \times [-b,\, m - b - 1] $ by removing the triangular regions above the line $ x + y = m - b - 1 $ and below the line $ x + y = -b $. Consequently, the sets $ \mathcal{H}_m^b $ exhibit a hexagonal shape, as illustrated in Figures 5 and 6.

It is clear from \eqref{e-Hn} that the sequence $ \{H_m^b(j)\}_{j \geq 0} $ is decreasing;
for every $ j \in \mathbb{N} $, the set $ H_m^b(j) $ consists of $ (\ell_m^b)^j $ squares of side length $ \tfrac{1}{m^j} $, where  $ \ell_m^b $ denotes the cardinality of $ L_m^b $.  

Approximations of the fractal $ \mathcal{H}_3^1 $   have been used to describe certain self-similar graphs that are recursively generated using Kronecker products of the matrix
\[
\begin{pmatrix}
1 & 1 & 0 \\
1 & 1 & 1 \\
0 & 1 & 1
\end{pmatrix}
\]
\cite{LK}. However, we have not found an explicit construction of the corresponding fractal in the literature.

\begin{Thm}\label{T-Main-hex}
A point $ (x, y) \in Q $ lies in the $b$-balanced $m$-hexagon fractal $ \mathcal{H}_m^b $ if and only if the base $ m $ representations of $ x $ and $ y $ satisfy
\begin{equation}\label{e-bal-id}
-b \leq [x_i]_{m_b} + [y_i]_{m_b} \leq m - 1 - b
\end{equation}
for every $ i \in \mathbb{N} $.
\end{Thm}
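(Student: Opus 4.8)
The plan is to follow the template of Theorems~\ref{T-Main} and~\ref{T-Main-m}, proving the two implications separately, but with the caveat that in the balanced base the clean alignment between the grid of $m^{-j}$-squares and the digit intervals—supplied for the standard base by Lemma~\ref{L-1}—is no longer available. I will therefore read digits off the \emph{corners} of the approximating squares rather than off the points themselves. Throughout I use the representation of $x,y$ with vanishing integer part; this is legitimate because a point $(x,y)\in Q$ that lies in $\mathcal H_m^b$ automatically satisfies $x,y\in[0,\tfrac{m-1-b}{m-1}]$ (each balanced digit is $\le m-1-b$ and $x,y\ge 0$), so a representation $[0.x_1x_2\cdots]_m^b$ exists. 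The one structural fact I would record first is that the recursion~\eqref{e-Hn} exhibits $H_m^b(j)$ as a union of $m^{-j}$-squares whose lower-left corners are exactly the truncations $\big(\sum_{i=1}^j x_i m^{-i},\,\sum_{i=1}^j y_i m^{-i}\big)$ with $(x_i,y_i)\in L_m^b$ for $1\le i\le j$, and that, since the $m^{-(j+1)}$-grid refines the $m^{-j}$-grid and the sequence $\{H_m^b(j)\}$ is decreasing, each level-$(j+1)$ square sits inside a unique level-$j$ square.

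For the ``if'' direction I would argue exactly as in the converse part of Theorem~\ref{T-Main}. Given $(x,y)\in Q$ whose balanced fractional digits satisfy~\eqref{e-bal-id}—equivalently $(x_i,y_i)\in L_m^b$ for every $i\ge 1$—I form the truncations $P_n=\big([0.x_1\cdots x_n]_m^b,\,[0.y_1\cdots y_n]_m^b\big)$. Padding with zeros keeps every digit pair in $L_m^b$ (using $(0,0)\in L_m^b$, valid since $b\in[1,\tfrac m2]$ forces $-b\le 0\le m-1-b$), so $P_n$ is the corner of an $m^{-k}$-square of $H_m^b(k)$ for every $k\ge n$, hence $P_n\in\bigcap_k H_m^b(k)=\mathcal H_m^b$. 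Since $P_n\to(x,y)$ and $\mathcal H_m^b$ is closed, $(x,y)\in\mathcal H_m^b$.

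For the ``only if'' direction, suppose $(x,y)\in\mathcal H_m^b$, so $(x,y)\in H_m^b(n)$ for every $n$. Using the square-by-square nesting I choose a decreasing chain $S^{(1)}\supseteq S^{(2)}\supseteq\cdots$ with $(x,y)\in S^{(n)}\subseteq H_m^b(n)$ (possible by König's lemma, since only finitely many squares occur at each level). Nesting forces the corner digits to be consistent, producing a single sequence $(x_i,y_i)\in L_m^b$. Because $\operatorname{diam}S^{(n)}=\sqrt2\,m^{-n}\to 0$, the point $(x,y)$ coincides with the limit of the corners, so $x=\sum_{i\ge1}x_i m^{-i}=[0.x_1x_2\cdots]_m^b$ and likewise for $y$; this balanced representation satisfies~\eqref{e-bal-id} for all $i$.

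The main obstacle—and the reason this is not a verbatim copy of Theorem~\ref{T-Main-m}—is exactly the misalignment just mentioned: in the balanced base the set of $x$ with prescribed leading digits $x_1,\dots,x_n$ is the interval $\big[\sum x_i m^{-i}-\tfrac{b}{m^n(m-1)},\ \sum x_i m^{-i}+\tfrac{m-1-b}{m^n(m-1)}\big]$, which is \emph{not} the grid square $[\sum x_i m^{-i},\ \sum x_i m^{-i}+m^{-n}]$ appearing in~\eqref{e-Hn}. One therefore cannot claim that an interior point of a level-$n$ square has leading balanced digits equal to the corner's digits, so the naive analogue of Lemma~\ref{L-1} genuinely fails. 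Passing to corners and invoking that nested squares shrink to their common corner sidesteps this, and the only residual care is the usual boundary ambiguity of balanced expansions, which affects a measure-zero set of points each of which still admits one representation of the required form.
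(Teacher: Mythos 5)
Your route is genuinely different from the paper's: the paper simply asserts that the induction of Theorem~\ref{T-Main-m} ``generalizes line by line,'' whereas you read digits off the lower-left corners of the approximating squares and extract a consistent digit string by a compactness argument. Your observation that the naive analogue of Lemma~\ref{L-1} fails in the balanced base --- the set of $x$ with prescribed leading balanced digits is an interval that is offset from the grid square appearing in \eqref{e-Hn} --- is correct, and it is exactly the point the paper's ``line-by-line'' claim glosses over. If the structural facts you record at the outset were true, your corner-and-nesting device would be a clean way around that obstacle.

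The gap is that the key structural fact fails for the recursion \eqref{e-Hn} as literally written: when $b\ge 1$ the sequence $\{H_m^b(j)\}$ is \emph{not} decreasing, and a level-$(j+1)$ square whose $(j+1)$-st digit is negative protrudes outside every level-$j$ square. Concretely, for $m=3$, $b=1$ the level-$2$ square with digits $k_1=k_2=-1$, $h_1=h_2=0$ has lower-left corner $x=-\tfrac49$, while every point of $H_3^1(1)$ has $x\ge -\tfrac13$; hence $H_3^1(2)\not\subseteq H_3^1(1)$. This breaks both halves of your argument: the tree of squares you feed to K\"onig's lemma has no guaranteed infinite nested branch, and in the ``if'' direction your truncation $P_n$ need not lie in $H_m^b(k)$ for $k<n$ (the same example gives $P_2=(-\tfrac49,0)\notin H_3^1(1)$), so ``$P_n\in H_m^b(k)$ for all $k\ge n$'' does not yield $P_n\in\bigcap_k H_m^b(k)$ without monotonicity. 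To be fair, the paper itself asserts the decreasing property without proof, and its Figure~6 actually depicts a different recursion in which each scaled hexagon is re-anchored so as to fill its parent cell exactly; under that (evidently intended) definition the nesting holds and your proof goes through essentially as written. With the literal \eqref{e-Hn} the statement itself appears to fail: the point $(\tfrac16,\tfrac16)$ admits balanced representations of its coordinates with digit pairs $(1,0),(-1,1),(-1,1),\dots$ satisfying \eqref{e-bal-id}, yet the unique $3^{-2}$-grid cell containing it has corner $(\tfrac19,\tfrac19)$, forcing $k_2=h_2=1$ and $k_2+h_2=2>m-1-b$, so it is not in $H_3^1(2)$. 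So before either your proof or the paper's can close, the definition \eqref{e-Hn} has to be repaired to match Figure~6; once that is done, your only remaining (minor) issue is the slightly circular justification that points of $\mathcal H_m^b\cap Q$ admit a representation with vanishing integer part.
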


\begin{proof}
%
Arguing as in the proof of  Theorem \ref{T-Main-m}, we can easily verify 
the set $ H_m^b(1) $  consists of points $(x,y)$    for which $  -b\leq  x_1+y_1\leq m-1-b$  and contains  all points $ (x,y)$  for which $
x = [0.x_1 00\cdots]_m, \qquad y = [0.y_1 00\cdots]_m,
$ and $
0 \leq [x_1]_m + [y_1]_m \leq m - 1.
$ The rest of the proof  is a line-by-line generalization of the proof of Theorem \ref{T-Main-m}.
\end{proof}


%
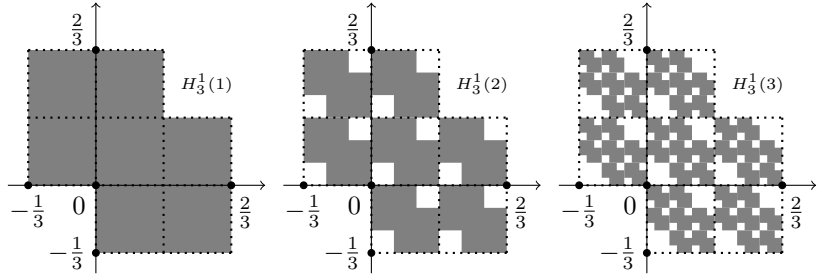
\begin{figure}
\begin{tikzpicture} [scale=0.9]

\newcommand{\myshape}{
	\fill[gray] (1,0)-- (3,0) -- (3,2) -- (2,2) -- (2,3)-- (0,3) --(0,1)--(1,1)-- cycle;
}

\begin {scope} 
\myshape
\end{scope}	
\draw[black, ->] (-0.3,1) -- (3.5,1);
\draw[black, ->] (1,-.3) -- (1, 3.7);
\draw[black, thick, dotted] (0,1) -- (0,3);
\draw[black, thick, dotted] (1,0) -- (1,3);
\draw[black, thick, dotted] (2,0) -- (2,3);
\draw[black, thick, dotted] (3,0) -- (3,2);
\draw[black, thick, dotted] (0,1) -- (3,1);
\draw[black, thick, dotted] (0,2) -- (3,2);
\draw[black, thick, dotted] (0,3) -- (2,3);
\draw[black, thick, dotted] (1,0) -- (3,0);

\draw (0, 1) node[black, below ]{ \small{$-\frac 13$} } ; 
\draw (1, .7) node[black, left]{ \small{$0$} };
\draw (2.8,.6) node[black, right]{ \small{ $\frac 23$} };
\draw (1,3.3) node[black, left]{ \small{ $ \frac 23$} };
\draw (1,0) node[black, left]{ \small{ $-\frac 13$} };
\draw (2 , 2.5) node[black, right]{ \tiny{ $H_3^1(1)$} };

\draw[fill=black]  (3,1)  circle(0.05);
\draw[fill=black]  (1,1)  circle(0.05);
\draw[fill=black]  (0,1)  circle(0.05);
\draw[fill=black]  (1,3)  circle(0.05);
\draw[fill=black]  (1,0)  circle(0.05);

\bigskip
\end{tikzpicture}
\begin{tikzpicture} [scale=0.9]

\newcommand{\myshape}{
\fill[gray] (1,0)-- (3,0) -- (3,2) -- (2,2) -- (2,3)-- (0,3) --(0,1)--(1,1)-- cycle;
}
\newcommand{\Myshape }   {
\foreach \x/\y in {1/0, 2/0,  0/1,  1/1, 2/1, 0/2,1/2 } {
	\begin{scope}[shift={(\x,\y)}, scale=1/3]
		\myshape
	\end{scope}
}}

	\begin{scope}
		\Myshape
	\end{scope}
\draw[black, ->] (-0.3,1) -- (3.5,1);
\draw[black, ->] (1,-.3) -- (1, 3.7);
\draw[black, thick, dotted] (0,1) -- (0,3);
\draw[black, thick, dotted] (1,0) -- (1,3);
\draw[black, thick, dotted] (2,0) -- (2,3);
\draw[black, thick, dotted] (3,0) -- (3,2);
\draw[black, thick, dotted] (0,1) -- (3,1);
\draw[black, thick, dotted] (0,2) -- (3,2);
\draw[black, thick, dotted] (0,3) -- (2,3);
\draw[black, thick, dotted] (1,0) -- (3,0);

\draw (0, 1) node[black, below ]{ \small{$-\frac 13$} } ; 
\draw (1, .7) node[black, left]{ \small{$0$} };
\draw (2.8,.6) node[black, right]{ \small{ $\frac 23$} };
\draw (1,3.3) node[black, left]{ \small{ $ \frac 23$} };
\draw (1,0) node[black, left]{ \small{ $-\frac 13$} };
\draw (2 , 2.5) node[black, right]{ \tiny{ $H_3^1(2)$} };
\draw[fill=black]  (3,1)  circle(0.05);
\draw[fill=black]  (1,1)  circle(0.05);
\draw[fill=black]  (0,1)  circle(0.05);
\draw[fill=black]  (1,3)  circle(0.05);
\draw[fill=black]  (1,0)  circle(0.05);
\end{tikzpicture}
\begin{tikzpicture}[scale=0.9]

\newcommand{\myshape}{
	\fill[gray] (1,0)-- (3,0) -- (3,2) -- (2,2) -- (2,3)-- (0,3) --(0,1)--(1,1)-- cycle;
}
\newcommand{\Myshape }   {
	\foreach \x/\y in {1/0, 2/0,  0/1,  1/1, 2/1, 0/2,1/2 } {
		\begin{scope}[shift={(\x,\y)}, scale=1/3]
			\myshape
		\end{scope}
}}

\foreach \x/\y in {1/0, 2/0,  0/1,  1/1, 2/1, 0/2,1/2 } {
	\begin{scope}[shift={(\x,\y)}, scale=1/3]
		\Myshape
\end{scope}}
\draw[black, ->] (-0.3,1) -- (3.5,1);
\draw[black, ->] (1,-.3) -- (1, 3.7);
\draw[black, thick, dotted] (0,1) -- (0,3);
\draw[black, thick, dotted] (1,0) -- (1,3);
\draw[black, thick, dotted] (2,0) -- (2,3);
\draw[black, thick, dotted] (3,0) -- (3,2);
\draw[black, thick, dotted] (0,1) -- (3,1);
\draw[black, thick, dotted] (0,2) -- (3,2);
\draw[black, thick, dotted] (0,3) -- (2,3);
\draw[black, thick, dotted] (1,0) -- (3,0);

\draw (0, 1) node[black, below ]{ \small{$-\frac 13$} } ; 
\draw (1, .7) node[black, left]{ \small{$0$} };
\draw (2.8,.6) node[black, right]{ \small{ $\frac 23$} };
\draw (1,3.3) node[black, left]{ \small{ $ \frac 23$} };
\draw (1,0) node[black, left]{ \small{ $-\frac 13$} };
\draw (2 , 2.5) node[black, right]{ \tiny{ $H_3^1(3)$} };

\draw[fill=black]  (3,1)  circle(0.05);
\draw[fill=black]  (1,1)  circle(0.05);
\draw[fill=black]  (0,1)  circle(0.05);
\draw[fill=black]  (1,3)  circle(0.05);
\draw[fill=black]  (1,0)  circle(0.05);
\end{tikzpicture}
\caption{ \small The first steps of the construction of the hexagon fractal ${\cal H}_3^1$}
\end{figure}
\bigskip

\medskip

\section{The dimension of the triangle and the hexagon fractals }

The Minkowski (or box-counting) dimension can be used to quantify  the  size  of a fractal set in terms of its scaling properties. Given a bounded set   $A\subset \mathbb{R}^n$, we consider the number $N(\varepsilon)$ of cubes of side   $\varepsilon > 0$ needed to cover the set. The \emph{Minkowski dimension} of $A$ is defined (when the limit exists) as
$$
\dim_M(A) := \lim_{\varepsilon \to 0^+} \frac{\log N(\varepsilon)}{\log(1/\varepsilon)}.
$$
The Minkowski dimension can be interpreted as the rate at which the number of boxes needed to cover the set grows as the box size decreases.  It  closely related to the Hausdorff dimension   though generally easier to compute and less sensitive to irregularities. 

For    the  definitions and a detailed  treatment of   the fractal measures and the Hausdorff  and Minkowski dimensions   see e.g \cite[Chapt.1]{E} and also   \cite{F, tricot}. For   applications of the Minkowski dimension  in dynamics and physics, see \cite{mandelbrot}.

Our construction of the  triangle and hexagon fractals allows an easy calculation of their Minkowski dimension.

\begin{Thm}\label{T-dim}
a) For every integer $m\ge 2$, the Minkowski dimension of the triangle fractal ${\mathcal T}_m$ is \begin{equation}\label{e-dim-T} d_M({\mathcal T}_m)=\frac{\log(\frac{m(m+1)}{2} )}{\log(m)}.\end{equation}

b) For every  integers $m>2$ and $b\in [1, \frac m2]$, the  Minkowski  dimension of the hexagon fractal $ {\mathcal H}_m^b$ is  \begin{equation}\label{e-dim-H} d_M({\mathcal H}_m^b)=\frac{\log\big(\frac{m(m+1)}{2}+b(m-1-b)\big)}{\log(m)}.\end{equation}

\end{Thm}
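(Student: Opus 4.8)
The plan is to work directly with the self-similar square constructions $\{T_m(n)\}$ and $\{H_m^b(n)\}$, for which the exact number of constituent squares at each stage is already recorded, and to estimate $N(\varepsilon)$ along the geometric sequence $\varepsilon = m^{-n}$. Recall that $T_m(n)$ is a union of $(s_m)^n$ squares of side $m^{-n}$ with $s_m = \frac{m(m+1)}{2}$, that $\{T_m(n)\}$ decreases to $\mathcal{T}_m$, and similarly that $H_m^b(n)$ is a union of $(\ell_m^b)^n$ squares of side $m^{-n}$ decreasing to $\mathcal{H}_m^b$, where $\ell_m^b = |L_m^b|$. Since $\mathcal{T}_m \subset T_m(n)$, those $(s_m)^n$ squares already cover $\mathcal{T}_m$, which gives at once the upper bound $N(m^{-n}) \le (s_m)^n$ (and $N(m^{-n}) \le (\ell_m^b)^n$ in the hexagon case).

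For the lower bound I would first check that each construction square contains a point of the fractal. A square of $T_m(n)$ is indexed by admissible leading digits $(x_1,\dots,x_n),(y_1,\dots,y_n)$ satisfying the no-carry condition \eqref{e-cond-m} for $i\le n$; extending both expansions by zeros produces its lower-left vertex $\big([0.x_1\cdots x_n 00\cdots]_m,\,[0.y_1\cdots y_n 00\cdots]_m\big)$, which satisfies \eqref{e-cond-m} for \emph{every} $i$ and hence lies in $\mathcal{T}_m$ by Theorem~\ref{T-Main-m}. The identical argument, now with \eqref{e-bal-id} and Theorem~\ref{T-Main-hex}, applies to $\mathcal{H}_m^b$ because $0\in\mathbb{Z}_m^{(b)}$. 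Since these squares are cells of the mesh $m^{-n}\mathbb{Z}^2$ with pairwise disjoint interiors, any single box of side $m^{-n}$ meets at most four of them, so $N(m^{-n}) \ge (s_m)^n/4$ (resp. $(\ell_m^b)^n/4$). Taking logarithms, dividing by $\log m^n = n\log m$, and letting $n\to\infty$, the constant $-\log 4$ washes out and both bounds converge to $\frac{\log s_m}{\log m}$ (resp. $\frac{\log \ell_m^b}{\log m}$). To pass from the subsequence $\varepsilon=m^{-n}$ to the full limit $\varepsilon\to 0^+$ I would invoke the standard fact (see, e.g., \cite[Chapt.1]{E}) that the box dimension may be computed along any geometric sequence of scales, using monotonicity of $N$ together with the bounded ratio $m$ between consecutive scales. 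This settles part (a) with $s_m=\frac{m(m+1)}{2}$.

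For part (b) it then remains only to evaluate $\ell_m^b=|L_m^b|$. I would view $L_m^b$ as the lattice points of the square $[-b,\,m-1-b]^2$, which contains exactly $m^2$ lattice points, with the two corner triangles $\{h+k\ge m-b\}$ and $\{h+k\le -b-1\}$ removed. After the shift $k'=k+b,\ h'=h+b\in\{0,\dots,m-1\}$ and summing the counts $\#\{k'+h'=t\}$ over the relevant ranges of $t$, the upper triangle contributes $\tfrac{(m-1-b)(m-b)}{2}$ and the lower triangle $\tfrac{b(b+1)}{2}$; subtracting from $m^2$ and simplifying yields $\ell_m^b=\frac{m(m+1)}{2}+b(m-1-b)$, which inserted into $\frac{\log \ell_m^b}{\log m}$ gives \eqref{e-dim-H}. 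I expect the main obstacle to lie not in any single estimate but in the bookkeeping of this last step and of the lower bound: one must confirm that every construction square genuinely carries a fractal point (so that counting squares is not an overcount for covering), and that the two corner-triangle sums are evaluated on the correct side of the peak $t=m-1$, which is precisely where the hypothesis $b\in[1,\tfrac{m}{2}]$ is used.
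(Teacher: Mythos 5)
Your proposal is correct and follows the same basic route as the paper: both arguments compute the box dimension by counting the $(s_m)^n$ (resp.\ $(\ell_m^b)^n$) construction squares of side $m^{-n}$ and letting $n\to\infty$. The difference is one of rigor and of bookkeeping. The paper simply sets $N(m^{-n})$ equal to the number of construction squares and divides logarithms; you additionally justify the two halves of that identification --- the upper bound from $\mathcal{T}_m\subset T_m(n)$, and the lower bound by exhibiting a fractal point (the lower-left vertex, whose terminating expansion satisfies the digit condition for all $i$, via Theorems~\ref{T-Main-m} and~\ref{T-Main-hex}) in every construction square, together with the passage from the subsequence $\varepsilon=m^{-n}$ to the full limit. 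These steps are genuinely needed for the lower bound and are handled correctly; the constant in ``at most four mesh cells'' is immaterial, as you note. For the cardinality of $L_m^b$ you use complementary counting (removing the two corner triangles from the $m^2$ lattice points of $[-b,m-1-b]^2$) instead of the paper's row-by-row summation in Lemma~\ref{L-lattice}; your computation yields $\frac{m(m+1)}{2}+b(m-1-b)$, which agrees with the theorem and with the picture ($H_3^1(1)$ has $7$ squares), whereas the paper's lemma as printed states $\frac{m(m-1)}{2}+b(m-1-b)$ and uses row counts $m-1-h^*$ that are off by one --- so your route not only reaches the correct value but avoids an error in the paper's own proof. The only loose end is your parenthetical about where $b\le\frac m2$ enters: the two corner sums stay on the correct side of the peak $t=m-1$ for any $1\le b\le m-1$, so the hypothesis is not really needed for that step; this is a remark about emphasis, not a gap.
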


\medskip

To  prove Theorem \ref{T-dim} we need the following 
\begin{Lemma}\label{L-lattice}
For every integer    $m\ge 2$  and $b\in [0, \frac m2]$,  let 
$$L_{m}^b:=\{(k, h)\in \Z\times \Z \ :  \  h,\, k,\, h+k\in [-b, \  m-1-b]   \}.$$
The cardinality of $L_{m}^b$  is \begin{equation}\label{e-lmb}  \ell_m^b :=\dsize\frac{m(m-1)}{2}+b(m-1-b). \end{equation}    

\end{Lemma}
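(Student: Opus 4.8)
The plan is to compute $\ell_m^b = |L_m^b|$ directly as the number of lattice points of a square grid, minus two triangular corners. Since $k$, $h$, and $k+h$ are all constrained to the common range $I := \{-b,\, -b+1,\, \dots,\, m-1-b\}$, whose endpoints differ by $m-1$, the set $I$ consists of exactly $m$ consecutive integers. First I would observe that the two \emph{individual} constraints $k \in I$ and $h \in I$ single out precisely the $m^2$ lattice points of the square $I \times I$, so that $\ell_m^b$ equals $m^2$ minus the number of those pairs whose sum violates the remaining constraint $-b \le k+h \le m-1-b$.

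Next I would count the two excluded corners through the tent function $s \mapsto \#\{(k,h)\in I\times I : k+h=s\}$, which rises by $1$ from the value $1$ at $s=-2b$ up to its maximum $m$ at the central sum $s=m-1-2b$, and then falls by $1$ back to $1$ at $s=2m-2-2b$. The \emph{too-large} corner is the set of sums $s=m-b,\, m-b+1,\, \dots,\, 2m-2-2b$; since $m-b$ already lies on the descending side of the tent (because $m-b \ge m-1-2b$ for all $b \ge 0$), this corner contributes the full triangular count $(m-1-b)+(m-2-b)+\cdots+1 = \tfrac{(m-1-b)(m-b)}{2}$. Symmetrically, the \emph{too-small} corner $s=-2b,\, \dots,\, -b-1$ lies entirely on the ascending side and contributes $1+2+\cdots+b=\tfrac{b(b+1)}{2}$.

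Subtracting both corners from $m^2$ and simplifying then yields $\ell_m^b = m^2 - \tfrac{(m-1-b)(m-b)}{2} - \tfrac{b(b+1)}{2} = \tfrac{m(m+1)}{2} + b(m-1-b)$. This is exactly $s_m + b(m-1-b)$, where $s_m=\tfrac{m(m+1)}{2}$ is the square count of the triangle fractal from Section~3, and it agrees with the quantity inside the logarithm in the dimension formula \eqref{e-dim-H}; at $b=0$ it correctly recovers $s_m$, as it must, because $L_m^0$ is precisely the triangle condition underlying \eqref{e-Qn}. I would therefore note that the displayed constant in \eqref{e-lmb} should read $\tfrac{m(m+1)}{2}$ rather than $\tfrac{m(m-1)}{2}$, since the latter already contradicts both $s_m$ and \eqref{e-dim-H}.

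The step I would be most careful about is verifying that neither corner is clipped by the boundary of $I\times I$, i.e.\ that the two excluded regions are genuine right triangles with the stated leg lengths $m-1-b$ and $b$ and do not overlap. This reduces to checking that the peak $s=m-1-2b$ of the tent function lies strictly between the two corners, which holds for every $0 \le b \le \tfrac{m}{2}$; granting this, the two corner sums are exact and the closing algebra is routine.
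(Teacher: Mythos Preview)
Your argument is correct and takes a genuinely different route from the paper's. The paper slices $L_m^b$ horizontally: for each fixed second coordinate $h^*$ it determines the admissible range of $k$ and sums the resulting counts, handling $h^* \ge 0$ and $h^* < 0$ separately. You instead start from the full $m \times m$ grid $I \times I$ and subtract the two triangular corners where the diagonal constraint $k+h \in I$ fails, reading off each corner size from the tent function $s \mapsto \#\{(k,h)\in I\times I : k+h = s\}$. Your decomposition is more symmetric, makes the hexagonal geometry of Figure~5 explicit, and avoids the case split; the paper's row-by-row count is more elementary but also more error-prone.

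You are also right about the typo: the constant in \eqref{e-lmb} should be $\tfrac{m(m+1)}{2}$, not $\tfrac{m(m-1)}{2}$. This is the value that actually appears in \eqref{e-dim-H} and in the proof of Theorem~\ref{T-dim}, and the paper's own first paragraph of the lemma's proof computes $\ell_m^0 = \tfrac{m(m+1)}{2}$, contradicting the stated formula at $b=0$. In fact the paper's proof carries a matching pair of off-by-one slips: for fixed $h^* \ge 0$ the number of admissible $k$ in $[-b,\,m-1-b-h^*]$ is $m - h^*$, not $m-1-h^*$, and similarly $m + h^*$ rather than $m-1+h^*$ for $h^* < 0$; these propagate to the erroneous $\tfrac{m(m-1)}{2}$. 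Your count is the correct one.
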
	
\begin{proof}
When $b=0$, the points of   $L_{m}^b$  lie in   the closed  triangle with  vertices $(0,0)$, $(0, m-1)$ and $(m-1,0)$. Thus, $L_m^b$ contains 
$m+ (m-1)+\cdots +1 = \frac{m(m+1)}{2} $ points,   and \eqref{e-lmb} holds in this special case.

\medskip
Assume now $m>2$ and $b\ge 1$;  
fix $h^*\in \{0,\, ... \, m-b-1\}$.    The condition $-b\leq h^*+k\leq m-1-b $ yields 
$-b-h^*\leq k\leq m-1-b-h^*$, but  since we require $k\in [-b, m-1-b]$, we can only have 
$ -b\leq k\leq m-1-b-h^*$.   Thus, the  set  $L_m^b$ contains $m-1-h^* $ points  with coordinates  $(h^*,\, k)$,  and  
$$\sum_{h^*=0}^{m-b-1} m -1-h^*= (m-1)(m-b ) -\frac{(m-b-1)(m-b )}{2}
$$ points  $(h,k)$ with $h\ge 0$.

When $h^*\in \{-b, \, -b+1\, \cdots -1 \}$,   $k$ satisfies the inequality  
$-b-h^*\leq k\leq m-1-b  $.  Thus, the set $L_b$ contains $m-1+h^* $ points  with coordinates  $(h^*,\, k)$   and 
$$\sum_{h^*=1}^{b} (m -1-h^*) 
= (m-1)b -\frac{b(b+1)}{2}
$$ points  $(h,k)$ with $h\in [-b, -1]$.

We can conclude that  the cardinality of  $L_m^b $ is
\begin{align*}
(m-1)(m-b ) &-\frac{(m-b-1)(m-b )}{2} +(m-1)b -\frac{b(b+1)}{2} \\ &=\frac{m(m-1)}{2}+b(m-1-b),  
\end{align*}
as required.
\end{proof}

\begin{proof}[Proof of Theorem \ref{T-dim}]

The triangle  fractal ${\mathcal T}_m$  is the intersection of  the decreasing family of sets $\{T_m(n)\}_{n\in\N}$ defined in \eqref{e-def-Tj}. Our construction shows that $T_m(n)$ is  the union of $(\ell_m^0)^n= (\frac{m(m+1)}{2})^n $ squares of side $m^{-n}$. 
For every $n\in\N$, we have that 
$  
\frac{\log ((\ell_m^0)^n)}{\log(m^n)}= \frac{\log(\frac{m(m+1)}{2})}{\log(m)},
$ 
and so $$ d_M({\mathcal T}_m)= \lim_{n\to\infty} \frac{\log ((\ell_m^0)^n)}{\log(m^n)}= \frac{\log(\frac{m(m+1)}{2})}{\log(m)}$$ as  required.

The  hexagon   fractal ${\mathcal H}_m^b$  is the intersection of  the   sets $\{H_m^b(n)\}_{n\in\N}$ defined in \eqref{e-Hn},  each of which  consists of  $(\ell_m^b)^n $ squares of side $m^{-n}$.   Thus, the Minkowski dimension  of the  hexagon fractal  ${\mathcal H}_m^b$ is  
$$
d_M({\mathcal H}_m^b)=\lim_{n\to\infty}  \frac{\log ((\ell_m^b)^n)}{\log(m^n)}=\frac{\log(\frac{m(m+1)}{2}+b(m-1-b))}{\log(m)}
$$
as required.
\end{proof}

When $m=2$,   Theorem \ref{T-dim}  shows that   the dimension of the Sierpiński triangle  $S_T$ is $\frac{\log(3)}{\log(2)}$, a well-known result.

\medskip
We conclude the paper with the following observation: 
The argument used in proof of Theorem~\ref{T-dim} can be used to show that   the  Lebesgue measure of the sets $T_m(n)$  is  
$
|T_m(n)| = m^{-2n} \left(\frac{m(m+1)}{2}\right)^n.
$
Since $\mathcal{T}_m=\bigcap_{n=0}^\infty T_m(n)$ and 
$\dsize 
\lim_{n\to\infty} |T_m(n)| = \lim_{n\to\infty} \left(\frac{1}{2} + \frac{1}{2m}\right)^n = 0,
$
it follows that  the triangle fractal $\mathcal{T}_m$ has Lebesgue measure zero. A similar argument shows that the Lebesgue measure of the hexagon  fractals $ \mathcal{H}_m^b $ is also zero.

However,    the sets $\mathcal{T}_m$ and $ \mathcal{H}_m^b $  become increasingly dense     as $m\to\infty$ because, using  for example, L'Hôpital's Rule, we can verify  that
\[
\lim_{m \to \infty} d_M(\mathcal{T}_m) = \lim_{m \to \infty} d_M(\mathcal{H}_m^b) = 2.
\]

This means that although the triangle and the hexagon fractals  have measure zero,   they asymptotically behave like two-dimensional objects when $m$   (which can be viewed as a   measure of  their fractal complexity) goes to infinity.

\bigskip
\noindent
ACKNOWLEDGMENTS:  All authors of this paper were equally involved in the conception and design  of the paper, its final drafting, and  its critical revision for intellectual content. All authors agree to be accountable for all aspects of the work.  

We  have no competing interests to declare.

No funding was received

\end{document}